\title{Ulrich ranks of Veronese varieties and equivariant instantons}
\author{Daniele Faenzi}
\address{Daniele Faenzi, 
Université Bourgogne Europe, CNRS, IMB UMR 5584, F-21000 Dijon, France
}
\email{daniele.faenzi@u-bourgogne.fr}
\author{Victor Pretti}
\address{Victor Pretti, Instituto de Matem\'atica e Estat\'istica,
Universidade de S\~ao Paulo,
1010 Rua do Mat\~ao,
S\~ao Paulo, Brazil}
\email{univ-pretti.w3qpo@simplelogin.com}
\theoremstyle{plain}
\newtheorem{theorem}{Theorem}
\newtheorem*{conjecture*}{Conjecture}
\newtheorem{lemma}[theorem]{Lemma}
\theoremstyle{definition}
\newtheorem{remark}[theorem]{Remark}
\DeclareMathOperator{\Ext}{Ext}
\DeclareMathOperator{\Hom}{Hom}
\DeclareMathOperator{\coker}{coker}
\DeclareMathOperator{\Pic}{Pic}
 \def\CC{{\mathbb{C}}}  \def\NN{{\mathbb{N}}} \def\PP{{\mathbb{P}}}    \def\ZZ{{\mathbb{Z}}} 
\def\SL{\mathrm{SL}}
\def\cE{{\mathcal{E}}}
\def\cF{{\mathcal{F}}}
\def\cG{{\mathcal{G}}}
\def\cO{{\mathcal{O}}}
\def\cT{{\mathcal{T}}}
\def\cU{{\mathcal{U}}}
\def\rI{{\mathrm{MI}}}
\def\ad{{\mathrm{ad}}}
\def\rP{{\mathrm{LP}}}
\def\rSI{{\mathrm{MI_{Sp}}}}
\def\rOI{{\mathrm{MI_{SO}}}}
\def\rO{{\mathrm{M_{SO}}}}
\def\SO{{\mathrm{SO}}}
\def\rk{{\mathrm{rk}}}
\def\Ur{{\mathrm{Ur}}}
\def\Sp{{\mathrm{Sp}}}
\begin{document}

\keywords{Ulrich bundles, Veronese variety, instanton bundles, equivariant bundles.}
\subjclass{14J60, 14F06.}

\begin{thanks}{D. F. partially supported by FanoHK ANR-20-CE40-0023, SupToPhAG/EIPHI ANR-17-EURE-0002, Bridges ANR-21-CE40-0017, Capes/COFECUB Ma 926/19.
V.P. partially supported by CAPES 88887.647097/2021-00, FAPESP 2022/12883-3. Both authors thank Departamento de Matemática de Belo Horizonte for the warm hospitality in October 2023.
}\end{thanks}

\begin{abstract}
We construct Ulrich bundles on Veronese threefolds of arbitrary degree as generic deformations of symmetric squares of equivariant instanton bundles on the projective space, thus classifying the rank of Ulrich bundles on such varieties and proving a conjecture of Costa and Miró-Roig.
\end{abstract}

\maketitle

\section{Introduction}

Given an $n$-dimensional closed subscheme $X \subset \PP^N$ polarized by a very ample divisor class $D$, an Ulrich sheaf of $X$ is a non-zero coherent sheaf $\cU$ such that $H^*(\cU(-t D))=0$ for $1 \le t \le n$.
Ulrich sheaves attracted a lot of attention in view of their multiple links to other topics such as Boij-Söderberg theory, Chow forms, matrix factorisations and so on.
As general reference on Ulrich bundles we use \cite{costa-miro-llopis}.
We define the set of Ulrich ranks of $X$ as
\[
\Ur(X) = \{r \in \NN \mid \mbox{there exists an Ulrich sheaf $\cU$ on $X$ with $\rk(\cU)=r$}\}.
\]

In general, it is very difficult to determine $\Ur(X)$, except when $n=1$ where one easily finds $\Ur(X)=\NN^*$. A fundamental conjecture of \cite{eisenbud-schreyer:chow} states that $\Ur(X)$ should be always non empty, while
the main point of \cite{blaser-eisenbud-schreyer} is to compute the lower bound of 
$\Ur(X)$, called the Ulrich complexity of $X$. 
For a few surfaces, we understand $\Ur(X)$ completely, for instance we know $\Ur(X)  = 2\NN^*$ for K3 surfaces of Picard number one, see \cite{K3,farkas-aprodu-ortega}; for the $d$-th Veronese surface, we know $\Ur(X)=2\NN^*$ if $d$ is even and $\Ur(X)=\NN^* \setminus \{1\}$ if $d$ is odd, see \cite{costa-miro:ulrich-veronese-surfaces}.
For $n \ge 3$ and for an arbitrary polarization, typically we have no clue of what $\Ur(X)$ should be, even though some results are available for minimal polarizations, e.g. \cite{ciliberto-flamini-knutsen:del_pezzo}.

In this note, we focus on Veronese varieties of dimension $n \ge 3$ and mostly on $n=3$, where the determination of $\Ur(X)$ is open. Our main result gives a complete answer for Veronese threefolds of arbitrary degree, answering a conjecture of Costa and Miró-Roig and giving the first example of varieties of dimension $n > 2$ where $\Ur(X)$ is completely understood, relatively to an arbitrary polarization. Actually $\Ur(X)$ is easy to determine also for $d$-th Veronese $n$-folds when $n!$ divides $d$, as Lemma \ref{n!} below shows, but when $d$ mod $n!$ is arbitrary this gets much harder. Here we look at $n=3$, the most interesting cases being when $d$ is congruent to $1,3$ or $5$ mod $6$.

Let us formulate the precise result. Given integers $d,n \ge 1$ we set $N_d^n={n+d \choose n}-1$ and consider the $d$-th Veronese $n$-fold $X^n_d$ in $\PP^{N_d^n}$. We omit $n$ from the notation when $n=3$. 

\begin{theorem}\label{main-1}
Let $d \ge 2$ and let $X_d$ be the $d$-th Veronese threefold in $\PP^{N_d}$. Set $\bar d \in \{0,\ldots,5\}$ for the remainder of the divison of $d$ by $6$. We have:
\begin{enumerate}[label=\roman*)]
    \item \label{0} if $\bar d=0$, then $\Ur(X_d)=6\NN^*$;
    \item \label{1} if $\bar d \in \{1,5\}$, then $\Ur(X_d)=\NN^* \setminus \{1\}$;
    \item \label{2} if $\bar d \in \{2,4\}$, then $\Ur(X_d)=2\NN^*$;
    \item \label{3} if $\bar d = 3$, then $\Ur(X_d)=3\NN^*$.
\end{enumerate}
\end{theorem}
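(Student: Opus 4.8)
The plan is to prove each equality by matching two inclusions: a numerical necessary condition coming from Riemann--Roch (the ``$\subseteq$'' direction, carrying the divisibility of the rank), and an explicit construction realising every admissible rank (the ``$\supseteq$'' direction). Since a direct sum of Ulrich sheaves is again Ulrich, the construction reduces to producing Ulrich bundles of the minimal admissible ranks: ranks $2$ and $3$ generate the numerical semigroup $\NN^*\setminus\{1\}$, rank $2$ alone generates $2\NN^*$, rank $3$ generates $3\NN^*$, and rank $6$ generates $6\NN^*$.

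First I would record the Hilbert polynomial. Identifying $X_d$ with $\PP^3$ and the polarisation $D$ with $\cO_{\PP^3}(d)$, a rank-$r$ sheaf $\cU$ on $X_d$ is Ulrich exactly when the corresponding sheaf $\cE$ on $\PP^3$ satisfies $H^*(\cE(-td))=0$ for $t=1,2,3$. These vanishings force $\chi(\cE(m))$ to be a cubic with roots $-d,-2d,-3d$; since its leading coefficient is $r/6$, we get $\chi(\cE(m))=\frac r6(m+d)(m+2d)(m+3d)$. Expanding by Riemann--Roch on $\PP^3$ and comparing coefficients pins down the Chern classes: $c_1(\cE)=2r(d-1)$, together with explicit values of $c_2$ and $c_3$. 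The integrality of $c_2$ turns out to be equivalent to $6\mid r(d^2-1)$, while that of $c_3$ is then automatic. Reducing modulo $6$ gives the four cases: $6\mid r$ when $\bar d=0$ (as $d^2-1$ is coprime to $6$); $2\mid r$ when $\bar d\in\{2,4\}$; $3\mid r$ when $\bar d=3$; and no condition when $\bar d\in\{1,5\}$ (as $6\mid d^2-1$). In the last case one still excludes $r=1$ directly: a rank-one torsion-free sheaf with $c_1=2(d-1)$ is $\cI_Z(2(d-1))$ for some $Z$ of codimension $\ge 2$, and $H^3(\cE(-3d))=H^3(\cI_Z(-d-2))$ surjects onto $H^3(\cO_{\PP^3}(-d-2))$, which is nonzero for $d\ge 2$, contradicting the Ulrich vanishing. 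This yields all the ``$\subseteq$'' inclusions.

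For existence I would realise the minimal ranks as generic deformations of (symmetric squares of) equivariant instanton bundles twisted by $\cO(2(d-1))$. When $3\nmid d$, take a rank-two instanton $\cF$ with $c_2(\cF)=\frac{d^2-1}{3}$ and set $\cE=\cF(2(d-1))$, a rank-two candidate; when $d$ is odd, take a rank-two instanton $\cF$ with $c_2(\cF)=\frac{d^2-1}{8}$ (an integer precisely because $8\mid d^2-1$ for odd $d$) and set $\cE=S^2\cF(2(d-1))\cong\ad(\cF)(2(d-1))$, a rank-three candidate; for $\bar d=0$, where neither rank two nor rank three is admissible, one needs a dedicated rank-six bundle (built analogously, e.g.\ from a symmetric square of a rank-three equivariant instanton or a rank-six symplectic instanton) with $c_1=0$ and $c_2=d^2-1$. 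In each case a direct Chern-class computation confirms that the candidate has \emph{exactly} the Hilbert polynomial $\frac r6(m+d)(m+2d)(m+3d)$ forced above, so that only the cohomological vanishing remains. The equivariance, under the relevant $\SL_2$-action on $\PP^3$, serves to make the cohomology of the homogeneous model computable by Borel--Weil--Bott, so that the relevant obstruction groups become accessible.

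\textbf{The main obstacle} is exactly this last step: upgrading the correct Euler characteristic $\chi(\cE(-td))=0$ to the vanishing of \emph{all} cohomology $H^i(\cE(-td))=0$ for every $i$ and every $t=1,2,3$. The symmetric-square instanton itself need not be Ulrich, so I would show that a general deformation is. By semicontinuity it suffices to control the cohomology at a single, well-chosen member of the family and then to show that the potentially non-vanishing groups are killed along a generic deformation --- equivalently, that the corresponding $\Ext$-obstruction maps are generically surjective. This is where the equivariant model together with the smoothness and irreducibility of the instanton moduli space enter: one computes the cohomology of the homogeneous bundle explicitly, isolates the groups that obstruct the Ulrich property, and proves they vanish on a dense open locus. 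Carrying out this analysis uniformly in $d$, and for each of the minimal ranks $2$, $3$ and $6$, is the technical heart of the argument.
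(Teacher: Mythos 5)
Your numerical ``$\subseteq$'' direction is sound, and in fact your derivation via integrality of $c_2$ (equivalent to $6\mid r(d^2-1)$) is cleaner than the paper's Lemma~\ref{n!}, which as literally stated does not cover $\bar d=4$; your exclusion of rank one is also correct. The structural plan --- minimal ranks $2,3,6$ plus direct sums, with rank $3$ obtained as $S^2\cF(2(d-1))$ for a generic rank-two instanton of charge $(d^2-1)/8$ deformed from an $\SL_2$-equivariant one --- is exactly the paper's strategy. However, the proposal defers precisely the step that constitutes the proof, and the method you sketch for it would fail. The crucial point is proving, at the special equivariant member $\cE_m$, the vanishing $H^*(S^2\cE_m(d-2))=0$ (Lemma~\ref{coh0} in the paper). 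You propose to compute this by Borel--Weil--Bott on ``the homogeneous model,'' but $\PP^3=\PP(V_3)$ is \emph{not} a homogeneous space for $\SL_2$: the bundles $\cE_m$ are equivariant, not homogeneous, Bott's theorem does not apply, and indeed $\cE_m$ is far from having natural cohomology. The paper instead extracts the vanishing from the equivariant resolution \eqref{resolution} by a Yoneda/cup-product argument, and this is the actual content of the result --- nothing in your outline replaces it. Relatedly, semicontinuity from the special member cannot give the remaining vanishing $H^*(S^2\cE(-2))=0$, since this may fail at $\cE_m$ itself; one needs that the Le Potier locus $\rP(k)$ is a proper divisor together with irreducibility of $\rI(2,k)$ to intersect the two open conditions. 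You mention irreducibility in passing but do not isolate this point.

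Two further gaps. For rank $2$ (cases $3\nmid d$), your plan of deforming from an equivariant member cannot even start: the $\SL_2$-equivariant rank-two instantons only exist in charges $\binom{m+1}{2}$, not $(d^2-1)/3$ in general, and the statement that a \emph{generic} rank-two instanton of that charge has the required vanishing is exactly the natural-cohomology theorem of Hartshorne--Hirschowitz \cite{hartshorne-hirschiwitz}, used in the paper through \cite[Proposition 3.7]{costa-miro:instanton-ulrich}; you neither cite nor prove it. For rank $6$ (case $\bar d=0$), your ``dedicated rank-six bundle'' built from a symmetric square of a rank-three instanton is speculative and unproven, and it is also unnecessary: the standard construction of \cite[Theorem 6.1]{eisenbud-schreyer:betti}, pushing forward $\cO_Y(d-1,2d-1,3d-1)$ along a generic finite degree-$6$ projection $Y=(\PP^1)^3\to\PP^3$, produces rank-six Ulrich bundles for every $d$ and closes this case immediately (this is how the paper's Lemma~\ref{n!} handles it). As it stands, the proposal is a correct roadmap of the paper's architecture, but each of the three existence statements rests on an argument that is either missing or would not work as described.
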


This proves \cite[Conjecture 1.1]{costa-miro:instanton-ulrich}, where of course $r=1$ has to be excluded from the statement. Indeed, the conjecture states that, for $d\ge2$, there should exist an Ulrich bundle of rank $r \ge 2$ on $X_d$ if and only if  $r(d^2-1)\equiv 0$ mod $6$,
and, in cases \ref{0}, \ref{1}, \ref{2}, \ref{3}, $d^2-1$ is congruous to, respectively, $-1,0,3,2$ mod $6$, so 
$r$ should be, respectively,
a multiple of $6$, $1$, $2$, $3$.

Let us say a word on the proof.
First, non-existence outside the range mentioned in the theorem is easily dealt with by looking at the Hilbert polynomial of an Ulrich bundle on $X_d$. So the point is about constructing Ulrich bundles in the prescribed ranks. 

About existence, one first notes that Ulrich bundles of rank $r$ are available when $n!$ divides $r$, due to a simple and smart construction appearing in \cite[Theorem 6.1]{eisenbud-schreyer:betti}. On the other hand, finding Ulrich bundles of rank $r < n!$ may be challenging and actually impossible for low $r$. However Costa and Miró-Roig observed that, if $3$ divides $d^2-1$ and $\cE$ is a sufficiently general instanton bundle of rank $2$ and charge  $\frac 13(d^2-1)$, then $\cE(2(d-1))$ is an Ulrich bundle on $X_d$. This relies on a difficult result of Hartshorne and Hirschowitz, actually the main result of \cite{hartshorne-hirschiwitz}, to the effect that $\cE$ has natural cohomology, meaning that for all $t \in \ZZ$ there is at most one value of $i \in \NN$ giving $H^i(\cE(t)) \ne 0$. This, combined with Riemann-Roch, implies the vanishing of cohomology required for $\cE(2(d-1))$ to be Ulrich.

So in order to complete the classification, the main obstacle is constructing Ulrich bundles of rank 3 in the prescribed range, which is precisely the main goal of this note. The rather surprising fact is that, instead of searching for general bundles relying on the usual techniques based on constructing bundles from curves (typically unions of lines and conics) combined with deformation theory, we achieve this using very special bundles, in some sense the most special we could think of, namely symmetric squares of equivariant instanton bundles for the action of $\SL_2$ on $\PP^3$ by binary cubics.

These instantons were examined in the framework of spaces of equivariant matrices of constant rank in \cite{boralevi-faenzi-lella}. They had been previously constructed and classified in \cite{faenzi:SL2} and actually much earlier, to the first author's astonishment, see \cite{bor-segert}.
The outcome of the classification is that, for any integer $m \ge 1$, there is one and only one instanton bundle $\cE_m$ of rank $2$ on $\PP^3$ which is $\SL_2$-equivariant for this action and having $c_2(\cE)={m + 1 \choose 2}$.
These are precisely the numerical invariants needed to get Ulrich bundles after taking symmetric squares.

To explain how these sheaves are useful in the present setting, let us state a second result. We write $\rI(r,k)$ for the coarse moduli space of rank-$r$ stable instanton bundles having second Chern class $k H^2$, where $H$ is the hyperplane class of $\PP^3$. 
According to \cite{jardim-verbitsky:trihyperkahler}, the space $\rI(2,k)$ is a smooth variety which is moreover irreducible by \cite{tikhomirov:odd, tikhomirov:even}.
We recall that Le Potier defined in \cite{le_potier:yang-mills} an effective divisor that we denote by $\rP(k) \subset \rI(2,k)$ consisting of instanton bundles $\cE$ such that $H^*(S^2 \cE(-2)) \ne 0$.
We also consider $\rSI(r,k)$ and $\rOI(r,k)$, the moduli spaces of symplectic and orthogonal instantons, namely pairs $([\cF],[\eta])$, where $[\cF]$ lies in $\rI(r,k)$ and $[\eta]$ is the proportionality class of $\eta : \cF \to \cF^\vee$, a skew-symmetric or symmetric duality. Hence actually $\rI(2,k) \simeq \rSI(2,k)$ for any $k \ge 1$.
The following theorem can be thought of as an incarnation of the classical isomorphism of Lie algebras $\mathfrak{sl}_2(\CC) \simeq \mathfrak{so}_3(\CC)$.

\begin{theorem}\label{main-2}
For any $k \ge 1$, the assignment $[\cE] \mapsto [S^2 \cE]$ gives an étale map onto an open subset:
$$\varphi : \rI(2,k) \setminus \rP(k) \to \rOI(3,4k).$$ 
For $e \in \{0,1,2\}$ and $h \ge 0$, with $(h,e) \ne (0,0)$, write $d=6h+2e+1$, $m=3h+e$, $k = {m+1 \choose 2}$. Then, for $[\cE]$ sufficiently general in  $\rI(2,k)$, $S^2 \cE(2(d-1))$ is an Ulrich bundle on $X_d$.
\end{theorem}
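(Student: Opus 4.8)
The plan is to handle the two assertions in turn, the geometric weight resting on the Ulrich statement. For a rank-$2$ bundle $\cE$ with $c_1(\cE)=0$, the isomorphism $\wedge^2\cE\simeq\cO$ gives a symplectic duality $\cE\simeq\cE^\vee$, and this makes $S^2\cE$ a rank-$3$ bundle equipped with a symmetric duality $S^2\cE\simeq(S^2\cE)^\vee$; a Chern class computation yields $c_1(S^2\cE)=c_3(S^2\cE)=0$ and $c_2(S^2\cE)=4kH^2$, so $S^2\cE$ is a candidate point of $\rOI(3,4k)$, stability as an orthogonal bundle being inherited from stability of $\cE$. By the definition of the Le Potier divisor recalled above, the instanton condition $H^1(S^2\cE(-2))=0$ is exactly the requirement that $[\cE]\notin\rP(k)$, so $\varphi$ is well defined on $\rI(2,k)\setminus\rP(k)$.

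To see that $\varphi$ is étale I would match tangent spaces. On the source, $T_{[\cE]}\rI(2,k)=\Ext^1(\cE,\cE)=H^1(\End\cE)$, and the splitting $\End\cE\simeq\cO\oplus\End_0\cE$ together with $H^1(\cO_{\PP^3})=0$ identifies this with $H^1(\End_0\cE)$. On the target, the infinitesimal orthogonal deformations are $H^1(\wedge^2 S^2\cE)$. The rank-$2$ plethysm $\wedge^2 S^2\cE\simeq S^2\cE\otimes\wedge^2\cE\simeq S^2\cE$, combined with the classical isomorphism $\End_0\cE\simeq S^2\cE$, identifies both tangent spaces with $H^1(S^2\cE)$, and $d\varphi$ is the map induced by the Lie algebra homomorphism $\fsl(\cE)\to\fso(S^2\cE)$, an incarnation of $\fsl_2\simeq\fso_3$ and hence an isomorphism. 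As $\rI(2,k)$ is smooth, $\varphi$ is étale, and being étale it has open image.

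For the Ulrich statement, the Veronese embedding has hyperplane class $D=dH$, so $S^2\cE(2(d-1))$ is Ulrich on $X_d$ precisely when $H^*(S^2\cE(2(d-1)-td))=0$ for $t=1,2,3$, that is when $H^*(S^2\cE(d-2))=0$, $H^*(S^2\cE(-2))=0$ and $H^*(S^2\cE(-d-2))=0$. Since $S^2\cE$ is self-dual, Serre duality on $\PP^3$ gives $H^i(S^2\cE(-d-2))\simeq H^{3-i}(S^2\cE(d-2))^*$, so the third condition is equivalent to the first and only two remain. The middle one is again $[\cE]\notin\rP(k)$, which holds on a dense open subset because $\rP(k)$ is an effective divisor in the smooth irreducible variety $\rI(2,k)$. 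Writing $d=2m+1$ and $k=\binom{m+1}{2}$, a Hirzebruch--Riemann--Roch computation on $\PP^3$ using $c_1(S^2\cE)=c_3(S^2\cE)=0$ and $c_2(S^2\cE)=4kH^2$ shows that $\chi(S^2\cE(-2))=0$ and $\chi(S^2\cE(d-2))=0$; what remains is to promote these Euler characteristics to the vanishing of each individual cohomology group.

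\textbf{The main obstacle} is exactly the vanishing $H^*(S^2\cE(d-2))=0$ at the twist $d-2=2m-1$, which does not follow from $\chi=0$ and is not a formal consequence of the natural cohomology of $\cE$. Here I would exploit the most special bundle available, the $\SL_2$-equivariant instanton $\cE_m$ with $c_2=\binom{m+1}{2}$: its symmetric square $S^2\cE_m$ is $\SL_2$-equivariant, so each $H^i(S^2\cE_m(2m-1))$ is a finite-dimensional $\SL_2$-module that can be read off from the equivariant monad of $\cE_m$ by decomposing into isotypic components and applying Borel--Weil--Bott, and the aim is to verify that every one of these modules is zero. Granting this, upper semicontinuity of $h^i$ over the smooth irreducible family $\rI(2,k)$ spreads the vanishing from $[\cE_m]$ to a dense open subset; intersecting with the dense open complement of $\rP(k)$ and invoking the Serre duality pairing above, the generic $\cE$ satisfies all three vanishings, so $S^2\cE(2(d-1))$ is an Ulrich bundle on $X_d$.
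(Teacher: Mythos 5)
Your overall route is the same as the paper's: reduce the three Ulrich vanishings for $S^2\cE(2(d-1))$ to the single twist $d-2$ (Serre duality handles $-d-2$, the complement of the Le Potier divisor handles $-2$), prove the vanishing at $d-2$ for the symmetric square of the $\SL_2$-equivariant instanton $\cE_m$, and then spread it to general $[\cE]$ by semicontinuity and irreducibility of $\rI(2,k)$. The problem is that the decisive step, $H^*(S^2\cE_m(d-2))=0$, is exactly what you ``grant'': reading it off the equivariant resolution ``by decomposing into isotypic components and applying Borel--Weil--Bott'' is not a proof, because it ignores the actual difficulty. Tensoring \eqref{resolution} with $\cE_m(d-2)$ computes $H^*(\cE_m\otimes\cE_m(d-2))$ via a complex whose terms you can indeed identify as $\SL_2$-modules, but whose differentials you must also control: writing $\cF_m=S^2\cE_m$, the vanishing of $H^0(\cF_m(d-2))$ and $H^1(\cF_m(d-2))$ is equivalent to the induced map $f\colon V_{3m}\otimes H^1(\cE_m(m-2))\to V_{3m+1}\otimes H^1(\cE_m(m-1))$ being surjective with kernel of dimension exactly $h^0(\cO_{\PP^3}(d-2))$, the summand coming from $\cE_m\otimes\cE_m=\cF_m\oplus\cO_{\PP^3}$. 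Equivariance and Schur's lemma do not force an equivariant map to have maximal rank: isotypic components occur with multiplicities, and individual blocks may well vanish, so knowing the modules is not enough. This is precisely what Lemma~\ref{coh0} supplies: cupping with the extension class $\zeta\in\Ext^2_{\PP^3}(\cE_m,\cO_{\PP^3}(-2m-1))$ of \eqref{resolution} yields a commutative square whose horizontal maps are isomorphisms and whose right-hand vertical map $H^3(\psi\otimes\cO_{\PP^3}(-2))$ is surjective because $H^*(\cE_m(-2))=0$; surjectivity of $f$ and the identification of $\ker f$ with $H^3(\cO_{\PP^3}(-d-2))$ follow, and only then do the two vanishings drop out. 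Without an argument of this kind (or a genuinely carried-out computation pinning down the maps, not just the modules), your proof is missing its crux.

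A lesser issue concerns étaleness. Matching both tangent spaces with $H^1(S^2\cE)$ and asserting that $d\varphi$ is the isomorphism induced by $\fsl_2\simeq\fso_3$ is the right idea (and agrees with Lemma~\ref{etale}), but a bijective differential from a smooth source does not formally yield an étale map unless you also know, or prove, that the target is smooth at the image point: smoothness of $\rI(2,k)$ alone does not settle this. The paper closes this by computing the obstruction space of the target: $\ad(\cF)\simeq\wedge^2\cF\simeq\cF^\vee\simeq\cF$, so the obstruction space of $\rO(3,4k)$ at $[S^2\cE]$ is $H^2(S^2\cE)$, which vanishes by the Jardim--Verbitsky description of $\rI(2,k)$; smoothness of the target and étaleness of $\varphi$ then follow together. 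This part of your argument is repairable with ingredients you already have; the genuine gap is the one above.
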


The crux of the argument is that, when $\cE$ is the $\SL_2$-equivariant instanton bundle $\cE_m$ mentioned above, we may show that $\cF_m = S^2 \cE_m$ satisfies $H^*(\cF_m(d-2))=0$. This uses a remarkable sheafified minimal graded free $\SL_2$-equivariant resolution of $\cE_m$, obtained in \cite{faenzi:SL2} and recovered in \cite{boralevi-faenzi-lella}, that looks like:
\begin{equation} \label{resolution}
0 \to \cO_{\PP^3}(-2m-1) \to V_{3m}\otimes \cO_{\PP^3}(-m-1) \to V_{3m+1} \otimes \cO_{\PP^3}(-m) \to \cE_m \to 0.
\end{equation}
Here, for $p \ge 0$, $V_p=S^p V_1$ is the irreducible $\SL_2$-representation of weight $p$.

With this being done, one observes that $H^*(\cF_m(-d-2))=0$ follows from Serre duality simply because $\cF_m$ is orthogonal. Then, choosing $[\cE] \notin \rP(k)$ in a neighborhood of $[\cE_m]$ will turn out in $S^2 \cE(2(d-1))$ being an Ulrich bundle.
In the very last passage, we use the irreducibility of $\rI(2,k)$.
We could actually bypass it by showing $H^*(\cF_m(-2))=0$, which we checked for several values of $m$ but failed to prove in general. Note that $\cE_m$ itself is far from having natural cohomology, as the resolution \eqref{resolution} shows, but $S^2 \cE_m$ should have natural cohomology, at least we know it does for low values of $m$.

\begin{remark}
For $n \ge 4$ and arbitrary $d$, it seems difficult to determine $\Ur(X^d_n)$. Here are some comments.
\begin{enumerate}[label=\roman*)]
\item According to arithmetic conditions on $d$ and $n$, we may determine $\Ur(X^n_d)$, for instance if $n!$ divides $d$ then $\Ur(X^n_d)=n! \NN^*$, see Lemma \ref{n!}.
\item For most values of $d$ mod $n!$, we may not have a full picture of $\Ur(X^n_d)$.
For instance, for $n=5$ and $d=2$, for any Ulrich bundle $\cU$ of rank $r$ the discussion of Lemma \ref{n!} gives $8 \mid r$. Letting $\Sp_6$ act linearly on $\PP^5$, 
we may consider the $\Sp_6$-bundle $\cF$ associated with the irreducible representation of maximal weight $\varpi_2+\varpi_3$, where $\varpi_1,\varpi_2,\varpi_3$ are the fundamental weights for $\Sp_6$. Then Borel-Weil ensures that $\cU=\cF(1)$ is an Ulrich bundle on $X^5_2$. We have $\rk(\cU)=16$. But for any odd number $k < 15$, we don't know if $X^5_2$ supports Ulrich bundles of rank $8k$.
\item For $n \ge 4$ and $d \ge 2$, according to recent work of Lopez and Raychaudhury, see \cite{lopez-raychaudhury:veronese}, $X^n_d$ does not carry Ulrich bundles of rank $\le 3$. This indicates that the situation for $X^3_d$ is very special. 
\end{enumerate}
\end{remark}

\section{Proof of the main results}

Let us go through the proof of the results announced in the introduction. In the next subsection we recall some down-to-earth arithmetics about Ulrich bundles and classify Ulrich ranks on Veronese varieties in the simplest situation.
Then we show a rather unexpected lemma stating that the $\SL_2$ equivariant instantons considered here, in spite of being far from having natural cohomology, have a symmetric square that (almost) does. In the following subsection we interpret the symmetric square in terms of moduli spaces and show the fact bundles obtained this way fill an open dense subset of a component of their moduli space. This is surprising at first sight, but turns out to be very natural by looking at adjoint bundles. The last subsection summarizes the proof on Theorem \ref{main-1}.

\subsection{Reminder on arithmetics of Ulrich bundles}

Let $\cU$ be an Ulrich bundle of rank $r$ on  $X^n_d$. Then, the Hilbert polynomial of $\cU$ looks as follows
\[
\chi(\cU(t))=\frac {r}{n!} \prod_{1 \le j \le n} (dj+t).
\]
In particular, evaluating at the above expression for $t=1$, we get:
\begin{equation} \label{divides}
n! \chi(\cU(1))= r \prod_{1 \le j \le n} (dj+1).
\end{equation}

\begin{lemma} \label{n!}
We have $n! \NN^* \subset \Ur(X^n_d)$. Moreover, let $m \in \NN^*$ be a divisor of $n!$ with $d \equiv m$ mod $n!$. Then $\Ur(X^n_d) \subset m\NN^*$. In particular, if $n!$ divides $d$, then $\Ur(X^n_d)=n! \NN^*$.
\end{lemma}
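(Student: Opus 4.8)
The plan is to treat the three assertions in turn: the inclusion $n!\,\NN^* \subseteq \Ur(X^n_d)$ by producing an explicit Ulrich bundle of rank $n!$, the divisibility $\Ur(X^n_d) \subseteq m\NN^*$ by a prime-by-prime reading of \eqref{divides}, and finally the equality when $n! \mid d$ by combining the two.

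For the first inclusion I would invoke \cite[Theorem 6.1]{eisenbud-schreyer:betti}, which supplies on $X^n_d$ an Ulrich bundle $\cU_0$ of rank $n!$ (remarkably independent of $d$). Since the Ulrich conditions $H^*(\cU(-tD))=0$ for $1 \le t \le n$ are inherited by direct sums and rank is additive, the bundles $\cU_0^{\oplus k}$ are Ulrich of rank $k\,n!$ for all $k \ge 1$, so $n!\,\NN^* \subseteq \Ur(X^n_d)$. This is the only step drawing on substantial external input, and I expect it to be the main obstacle were one to reprove it from scratch; granting the cited construction, everything else is elementary arithmetic.

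For the divisibility, fix an Ulrich bundle $\cU$ of rank $r$ and set $P = \prod_{1 \le j \le n}(dj+1)$, so that \eqref{divides} reads $n!\,\chi(\cU(1)) = rP$. As $\chi(\cU(1))$ is a non-negative integer, comparing $p$-adic valuations gives $v_p(r) \ge v_p(n!) - v_p(P)$ for every prime $p$, and it suffices to check $v_p(m) \le v_p(r)$ only at the primes $p \mid m$, the others being automatic. The key point is that such a $p$ divides $n!$, hence $p \le n$ and $v_p(n!) \ge 1$, while $d \equiv m \pmod{n!}$ forces $d \equiv m \equiv 0 \pmod p$; consequently each factor satisfies $dj+1 \equiv 1 \pmod p$, so $v_p(P)=0$ and thus $v_p(r) \ge v_p(n!) \ge v_p(m)$. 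As this holds for all primes, $m \mid r$, i.e. $\Ur(X^n_d) \subseteq m\NN^*$.

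Finally, if $n! \mid d$ then $m = n!$ is admissible, since $n! \mid n!$ and $d \equiv 0 \equiv n! \pmod{n!}$; the divisibility then gives $\Ur(X^n_d) \subseteq n!\,\NN^*$, which combined with the first inclusion yields $\Ur(X^n_d) = n!\,\NN^*$.
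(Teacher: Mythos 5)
Your proposal is correct and follows essentially the same route as the paper: the Eisenbud--Schreyer pushforward construction gives the inclusion $n!\,\NN^* \subseteq \Ur(X^n_d)$, and the divisibility comes from equation \eqref{divides} together with the observation that every factor $dj+1$ is congruent to $1$ modulo the relevant modulus, so that it cannot absorb the factors of $n!$. The only cosmetic difference is that the paper runs this second step as a congruence argument (reduce \eqref{divides} mod $n!$, replace $d$ by $m$, then reduce mod $m$ using $\prod_j(mj+1)\equiv 1 \bmod m$), whereas you phrase it prime by prime via $p$-adic valuations; the underlying arithmetic is identical.
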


\begin{proof}
The first statement follows from the construction of \cite[Theorem 6.1]{eisenbud-schreyer:betti}.
Indeed, we consider the product $Y$ of $n$ copies of $\PP^1$ as a subvariety of degree $n!$ in $\PP^{2^n-1}$. Let $\pi$ be a generic projection of $Y$ to $\PP^n$, so that $\pi$ is a finite map of degree $n!$. Choose $s \in \NN^*$. Then, with obvious notation, $\cU = \pi_*(\cO_Y(d-1,\ldots,nd-1)^{\oplus s})$ is an Ulrich bundle on $X^n_d$ of rank $n!s$. Therefore $n! \NN^* \subset \Ur(X^n_d)$.

Moreover, consider an Ulrich bundle $\cU$ on $X^n_d$ of rank $r$ and a divisor $m$ of $n!$. 
Equation \eqref{divides} gives
\[
r \prod_{1 \le j \le n} (mj+1) \equiv 0, \qquad \mbox{mod $n!$,}
\]
since $d\equiv m$ mod $n!$.
Further reducing mod $m$ gives
$r \equiv 0$ mod $m$, which is what we want.
For $m=n!$, we get the last statement.
\end{proof}

\subsection{Coholomogy vanishing in the positive quadrant}

Let us consider the action of $\SL_2$ on $\PP^3$ by binary cubics, namely we identify $\PP^3$ with $\PP(V_3)$, in other words $\SL_2$ acts linearly on $\PP^3$ with the irreducible representation of weight $3$. We use the notion of \textit{instanton bundle} on $\PP^3$, which is an $H$-stable locally free sheaf $\cE$ of rank $r>0$, such that $c_1(\cE)=c_3(\cE)=0$ and $H^*(\cE(-2))=0$. Writing $c_2(\cE)=kH^2$, we say that $k$ is the \textit{charge} of $\cE$. Note that stability is not always part of the definition of instanton bundle, however we do require it here.
Also, note that $H^0(S^2 \cE)=0$ for any instanton bundle $\cE$ of rank $2$. Indeed,
$\cE$ is of rank $2$ with $c_1(\cE)=0$ so $\wedge^2 \cE \simeq \cO_{\PP^3}$. Also, $\cE$ is stable, hence simple, so $\Hom(\cE,\cE^\vee) \simeq \Hom(\cE,\cE)$ is one-dimensional, hence generated by the skew-symmetric duality responsible for the isomorphism $\wedge^2 \cE \simeq \cO_{\PP^3}$. So there is no non-zero symmetric map $\cE \to \cE^\vee$. 

Now, recall that according to \cite{faenzi:SL2}, for all $m \ge 1$ there is one and only one instanton bundle $\cE_m$ of rank $2$ on $\PP^3$ which is $\SL_2$-equivariant for the action of binary cubics and has charge ${m+1 \choose 2}$. Also, a rank-2 instanton bundle which is $\SL_2$-equivariant for the action of binary cubics is necessarily of the form $\cE_m$.

\begin{lemma}\label{coh0}
For $e \in \{0,1,2\}$ and $h \ge 0$, with $(h,e) \ne (0,0)$, write $d=6h+2e+1$, $m=3h+e$, $k = {m + 1 \choose 2}$.
Set $\cF_m = S^2 \cE_m$. Then we have $H^*(\cF_m(d-2))=0$.
\end{lemma}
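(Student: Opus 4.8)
The plan is to build an $\SL_2$-equivariant locally free resolution of $\cF_m=S^2\cE_m$ out of \eqref{resolution}, twist it by $\cO_{\PP^3}(d-2)$, and extract $H^*(\cF_m(d-2))$ from it. The one piece of arithmetic that makes everything fit is the identity $d-2=2m-1$, which holds because $d=6h+2e+1$ and $m=3h+e$; the hypothesis $(h,e)\ne(0,0)$ guarantees $m\ge1$, so that $\cE_m$ exists. Write \eqref{resolution} as a complex $P_\bullet\colon P_2\xrightarrow{\phi_2}P_1\xrightarrow{\phi_1}P_0$ with $P_2=\cO_{\PP^3}(-2m-1)$, $P_1=V_{3m}\otimes\cO_{\PP^3}(-m-1)$, $P_0=V_{3m+1}\otimes\cO_{\PP^3}(-m)$, so $\cE_m=\coker\phi_1$. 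Since $\cE_m$ is locally free, $\cF_m$ is a direct summand of $\cE_m^{\otimes2}$ (the complement being $\wedge^2\cE_m\simeq\cO_{\PP^3}$), and the symmetric part of $\Tot(P_\bullet\otimes P_\bullet)$ furnishes an $\SL_2$-equivariant resolution
\[
0\to S^2P_2\to P_2\otimes P_1\to (P_2\otimes P_0)\oplus\wedge^2P_1\to P_1\otimes P_0\to S^2P_0\to\cF_m\to0.
\]

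Plugging in the $P_i$ and twisting by $\cO_{\PP^3}(2m-1)$, the five bundles become, from left to right, $\cO(-2m-3)$, $V_{3m}\otimes\cO(-m-3)$, $(V_{3m+1}\otimes\cO(-m-2))\oplus(\wedge^2V_{3m}\otimes\cO(-3))$, $V_{3m}\otimes V_{3m+1}\otimes\cO(-2)$, and $S^2V_{3m+1}\otimes\cO(-1)$. I would then read off cohomology term by term: every line-bundle twist occurring in the three rightmost terms is $\cO(a)$ with $-3\le a\le-1$, so their total cohomology vanishes, whereas in the three leftmost terms only $H^3$ survives. Feeding this into the spectral sequence $E_1^{p,q}=H^q(\cG^p)\Rightarrow H^{p+q}(\cF_m(2m-1))$ of the twisted resolution $\cG^\bullet$, only the row $q=3$ is nonzero, so it degenerates at $E_2$ and $H^*(\cF_m(2m-1))$ is computed by the three-term complex of top cohomologies
\[
H^3(\cO(-2m-3))\xrightarrow{\alpha}V_{3m}\otimes H^3(\cO(-m-3))\xrightarrow{\beta}V_{3m+1}\otimes H^3(\cO(-m-2)),
\]
where $\beta$ is induced by the resolution differential $P_2\otimes P_1\to P_2\otimes P_0$, that is, by $\phi_1(-2)$ (the other differential components land in terms with no $H^3$). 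Via Serre duality a direct count shows $\dim\bigl(V_{3m}\otimes H^3(\cO(-m-3))\bigr)=\tfrac16 m(m+1)(m+2)(3m+1)$ equals the sum of the dimensions of the two flanking spaces, so it suffices to prove this complex exact.

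The crux is the surjectivity of $\beta$. Since $H^{-1}(\cF_m(2m-1))=0$ for trivial reasons, the abutment forces $\alpha$ to be injective; as $\beta\alpha=0$ (it is a complex) and the dimensions match, exactness is then equivalent to $\beta$ being surjective. To establish this, factor $\phi_1(-2)$ through its image $\cK(-2)$, obtaining the two short exact sequences $0\to\cK(-2)\to V_{3m+1}\otimes\cO(-m-2)\to\cE_m(-2)\to0$ and $0\to\cO(-2m-3)\to V_{3m}\otimes\cO(-m-3)\to\cK(-2)\to0$. The instanton vanishing $H^*(\cE_m(-2))=0$ turns the first into an isomorphism $H^3(\cK(-2))\xrightarrow{\sim}H^3(V_{3m+1}\otimes\cO(-m-2))$, while $H^4=0$ on $\PP^3$ makes $H^3(V_{3m}\otimes\cO(-m-3))\to H^3(\cK(-2))$ surjective; composing gives $\beta$ surjective. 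Hence the three-term complex is exact and $H^*(\cF_m(d-2))=H^*(\cF_m(2m-1))=0$.

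I expect the main obstacle to be purely organizational rather than conceptual: correctly assembling the equivariant resolution of $S^2\cE_m$ (with the right Koszul-sign placement of $\wedge^2P_1$) and pinning down that the only surviving differential on top cohomology is exactly $H^3(\phi_1(-2))$. Once that identification is in place, the instanton condition $H^*(\cE_m(-2))=0$ does all the work, and the remaining verifications (the line-bundle cohomology and the dimension count) are routine.
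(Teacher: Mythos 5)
Your proof is correct, and it takes a genuinely different route from the paper's. You resolve $\cF_m=S^2\cE_m$ directly by the $\ZZ/2$-invariant part of $\Tot(P_\bullet\otimes P_\bullet)$ and run the hypercohomology spectral sequence of the twist by $\cO_{\PP^3}(d-2)$; the paper instead tensors \eqref{resolution} with $\cE_m(d-2)$ to resolve $\cE_m\otimes\cE_m(d-2)$, reduces to a map $f\colon V_{3m}\otimes H^1(\cE_m(m-2))\to V_{3m+1}\otimes H^1(\cE_m(m-1))$ with $H^0\simeq\ker f$, $H^1\simeq\coker f$, and then uses Yoneda cup product with the extension class $\zeta$ of \eqref{resolution} (via the functoriality square) to identify $f$ with $H^3(\psi(-2))$. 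The key computation is identical in both proofs: the surjectivity of $H^3(\psi(-2))$ (your $\beta$), established by factoring $\psi(-2)$ through its image and invoking $H^*(\cE_m(-2))=0$. What your route buys: you never need the cup-product formalism, nor the paper's separate treatment of $H^0$ and $H^3$ (which uses stability of $\cE_m$, the splitting $\cE_m\otimes\cE_m\simeq\cF_m\oplus\cO_{\PP^3}$, self-duality and Serre duality) --- all four cohomology groups fall out of the one spectral sequence, with $\alpha$ injective forced by the abutment in degree $-1$. What the paper's route buys is that it avoids assembling the symmetric-square resolution with its Koszul signs. One slip in your prose, not in your mathematics: the claim that ``every line-bundle twist occurring in the three rightmost terms is $\cO(a)$ with $-3\le a\le -1$'' is false for $m\ge 2$, since the middle term contains $V_{3m+1}\otimes\cO(-m-2)$ and $-m-2\le -4$; but your three-term complex of top cohomologies correctly includes $V_{3m+1}\otimes H^3(\cO(-m-2))$, your dimension count $\tfrac16\,m(m+1)(m+2)(3m+1)=\binom{2m+2}{3}+(3m+2)\binom{m+1}{3}$ checks out, and the rest of the argument accounts for this term properly, so the statement should simply be rephrased (only the summands $\wedge^2 V_{3m}\otimes\cO(-3)$, $V_{3m}\otimes V_{3m+1}\otimes\cO(-2)$ and $S^2V_{3m+1}\otimes\cO(-1)$ have vanishing total cohomology).
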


\begin{proof}
We have $d=2m+1$ and a decomposition $\cE_m\otimes \cE_m$ as $\cF_m\oplus\cO_{\PP^3}$, so
\[
H^k(\cF_m(t)) = H^k(\cE_m\otimes \cE_m(t)), \qquad \mbox{for all $t \in \ZZ$ if $k \in \{1,2\}$.}
\]
We saw in the previous paragraph that $H^0(\cF_m)=0$, hence 
$H^0(\cF_m(-t))=0$ for any $t \ge 0$. By Serre duality, since $\cF_m$ is self-dual, we get $H^3(\cF_m(t-4))=0$ for any $t \ge 0$, in particular $H^3(\cF_m(d-2))=0$.

We write $\psi$ for the map $V_{3m} \otimes \cO_{\PP^3}(-m-1) \to V_{3m+1} \otimes \cO_{\PP^3}(-m)$ appearing in the resolution \eqref{resolution} of $\cE_m$ and consider such resolution as a two-step extension of $\cE_m$ and $\cO_{\PP^3}(-2m-1)$ given by an element
\[
\zeta \in \Ext^2_{\PP^3}(\cE_m,\cO_{\PP^3}(-2m-1)).
\]

For all integers $k$ and any locally free $\cG$, we look at the Yoneda map:
\[
\Upsilon_{\cG,k}:H^k(\cG \otimes \cE_m) \otimes \Ext^2_{\PP^3}(\cE_m,\cO_{\PP^3}(-2m-1)) \to H^{k+2}(\cG(-2m-1)).
\]
This is functorial with respect to morphisms $\lambda : \cG \to \cG'$, namely, these give rise to commutative diagrams:
\begin{equation} \label{yoneda_square}\xymatrix@-2ex{
H^k(\cG \otimes \cE_m) \otimes \Ext^2_{\PP^3}(\cE_m,\cO_{\PP^3}(-2m-1))  \ar^-{\Upsilon_{\cG,k}}[rr] \ar_{H^k(\lambda\otimes \cE_m)\otimes \mathrm{id}}[d] && H^{k+2}(\cG(-2m-1)) \ar^-{H^{k+2}(\lambda \otimes \cO_{\PP^3}(-2m-1))}[d] \\
H^k(\cG' \otimes \cE_m) \otimes \Ext^2_{\PP^3}(\cE_m,\cO_{\PP^3}(-2m-1)) \ar^-{\Upsilon_{\cG',k}}[rr] && H^{k+2}(\cG'(-2m-1)) 
}
\end{equation}
We again use the resolution \eqref{resolution} to observe that cupping with $\zeta$ induces isomorphisms:
\begin{align*}
\Upsilon_{\cO_{\PP^3}(m-2),1} (-\otimes \zeta) & : H^1(\cE_m(m-2)) \to H^3(\cO_{\PP^3}(-m-3)),\\
\Upsilon_{\cO_{\PP^3}(m-1),1} (-\otimes \zeta) & : H^1(\cE_m(m-1)) \to H^3(\cO_{\PP^3}(-m-2)).
\end{align*}
We denote both such isomorphisms by $\wedge \zeta$.

Consider the resolution of $\cE_m\otimes \cE_m(d-2)$ obtained by tensoring  \eqref{resolution} by $\cE_m(d-2)$, that is,
\[
    0 \rightarrow \cE_m(-2) \rightarrow V_{3m} \otimes \cE_m(m-2)\rightarrow V_{3m+1}\otimes \cE_m(m-1) \rightarrow \cE_m \otimes \cE_m(d-2) \rightarrow 0.
\]
We already know that $H^\ast(\cE_m(-2))=0$ since $\cE_m$ is an instanton bundle. 
Again from \eqref{resolution} we see
\[
H^k(\cE_m(m-1))=H^k(\cE_m(m-2))=0, \qquad \mbox{if $k \ne 1$}.
\]
Therefore $H^2(\cF_m(d-2))=H^2(\cE_m \otimes \cE_m(d-2))=0$.
Moreover we get a map
\[
f : V_{3m} \otimes  H^1(\cE_m(m-2)) \to V_{3m+1}\otimes  H^1(\cE_m(m-1)),
\]
obtained as $f=H^1(\psi \otimes \cE_m(2m-1))$,
with the property that
\begin{align} \label{property that}
        H^0(\cE_m \otimes \cE_m(d-2)) &\simeq \ker(f), &&
H^1(\cF_m(d-2)) = H^1(\cE_m \otimes \cE_m(d-2))   \simeq \coker(f).
\end{align}

Then applying \eqref{yoneda_square} to $\psi \otimes \cO_{\PP^3}(2m-2)$ yields a commutative diagram induced by cupping with $\zeta$: 
\[
\xymatrix{
V_{3m} \otimes H^1(\cE_m(m-2)) \ar_-{f=H^1(\psi \otimes \cE_m(2m-1))}[d] \ar^-{V_{3m} \otimes \wedge \zeta}[rr] && V_{3m} \otimes H^3(\cO_{\PP^3}(-m-3)) \ar^-{H^3(\psi \otimes \cO_{\PP^3}(-2))}[d]\\
V_{3m+1} \otimes H^1(\cE_m(m-1)) \ar_{V_{3m+1}\otimes \wedge \zeta}[rr] && V_{3m+1} \otimes H^3(\cO_{\PP^3}(-m-2))
}
\]

The horizontal arrows are isomorphisms and the vertical arrow on the right is surjective since $H^\ast(\cE_m(-2))=0$. Hence the vertical arrow on the left is also surjective. Also, the kernel of $H^3(\psi \otimes \cO_{\PP^3}(-2))$ is identified with $H^3(\cO_{\PP^3}(-2m-3))=H^3(\cO_{\PP^3}(-d-2))$. Therefore, \eqref{property that} gives:
\begin{align*}
H^1(\cF_m(d-2)) &= H^1(\cE_m\otimes\cE_m(d-2))=0 \\
H^0(\cF_m(d-2))\oplus H^0(\cO_{\PP^3}(d-2)) &= H^0(\cE_m\otimes \cE_m(d-2)) \simeq H^3(\cO_{\PP^3}(-d-2)).
\end{align*}

Hence $H^0(\cF_m(d-2)) = 0$. This finishes the proof that  $H^\ast(\cF_m(d-2)) = 0$.
\end{proof}

\subsection{The symmetric square as map to the space of orthogonal instantons}

Consider $\rO(3,4k)$ the coarse moduli space of rank-$3$ stable orthogonal bundles with Chern classes of the form $(0,4kH^2,0)$ and an element 
$([\cF],[\eta])$ of $\rO(3,4k)$. Then $\cF$ is a stable bundle, isomorphic to $\cF^\vee$ via $\eta$. If $\eta' : \cF \to \cF^\vee$ is not proportional to $\eta$, then in the pencil $\lambda \eta + \lambda' \eta'$, for $[\lambda:\lambda']\in \PP^1$, there must be a morphism $\cF \to \cF^\vee$ having trivial determinant. However, $\cF$ and $\cF^\vee$ are stable of the same Hilbert polynomial, so this cannot happen unless $\eta$ and $\eta'$ are proportional. So $[\eta]$ is actually determined uniquely by $[\cF]$ and we will suppress $[\eta]$ from the notation of the points in $\rO(3,4k)$. 

Also, since any $[\cE] \in \rI(2,k)$ is equipped with skew-symmetric duality $\eta_\cE$ which is canonical up to a non-zero scalar, $\cF = S^2 \cE$ is equipped with the canonical class $[\eta_\cF]$ of the symmetric duality $\eta_\cF=S^2 \eta_\cE$. Furthermore, the space $\rOI(3,4k)$ is an open subvariety of $\rO(3,4k)$. 

\begin{lemma}\label{etale}
Let $k \ge 1$ be an integer. Then the assignment $[\cE] \to [S^2 \cE]$ gives an étale map 
$$\varphi : \rI(2,k) \to \rO(3,4k),$$
which restricts to the morphism
$$\varphi : \rI(2,k) \setminus \rP(k) \to \rOI(3,4k).$$ 
So that, $\varphi(\rI(2,k))$ is a smooth irreducible Zariski-dense open subset of a component of $\rO(3,4k)$. 
\end{lemma}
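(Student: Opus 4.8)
The plan is to verify in turn that $\varphi$ is well defined, that it is étale, and then to read off the geometric assertion from these facts together with the smoothness and irreducibility of $\rI(2,k)$ recalled in the introduction.

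\emph{Well-definedness.} First I would check that $\cF=S^2\cE$ has the required invariants: writing the Chern roots of $\cE$ as $a,-a$ (recall $c_1(\cE)=0$), those of $S^2\cE$ are $2a,0,-2a$, whence $c_1(\cF)=0$, $c_2(\cF)=4c_2(\cE)=4kH^2$ and $c_3(\cF)=0$, while the symmetric duality $\eta_\cF=S^2\eta_\cE$ makes $\cF$ orthogonal. For stability I would use that $\cE$ is $\mu$-stable of slope $0$, so in characteristic zero $\cE\otimes\cE$ is $\mu$-semistable and hence so is its direct summand $S^2\cE$. A proper slope-$0$ saturated subsheaf or quotient would, after passing to a rank-one sub- or quotient sheaf and possibly dualizing (using $(S^2\cE)^\vee\simeq S^2\cE$), be forced by $\Pic(\PP^3)=\ZZ$ to equal $\cO_{\PP^3}$, producing a nonzero section of $S^2\cE$ and contradicting the vanishing $H^0(S^2\cE)=0$ established earlier. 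Thus $S^2\cE$ is $\mu$-stable, in particular orthogonally stable, so $[S^2\cE]\in\rO(3,4k)$. Finally $\cF$ is an instanton exactly when $H^\ast(\cF(-2))=H^\ast(S^2\cE(-2))=0$, which by the very definition of Le Potier's divisor means $[\cE]\notin\rP(k)$; this yields the claimed restriction $\varphi:\rI(2,k)\setminus\rP(k)\to\rOI(3,4k)$.

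\emph{Étaleness.} This is the conceptual heart, and I would organize it around adjoint bundles and the exceptional isomorphism $\fsl_2\simeq\fso_3$. The tangent space to $\rI(2,k)$ at $[\cE]$ is $\Ext^1(\cE,\cE)=H^1(\cE^\vee\otimes\cE)$; decomposing $\cE^\vee\otimes\cE=\cO_{\PP^3}\oplus\fsl(\cE)$ with $\fsl(\cE)\simeq S^2\cE=\cF$ and using $H^1(\cO_{\PP^3})=0$ gives $T_{[\cE]}\rI(2,k)=H^1(\cF)$. On the other side, deformations of the orthogonal bundle $\cF$ are governed by its adjoint bundle $\fso(\cF)=\wedge^2\cF$, and since $\wedge^2\cF\simeq\cF^\vee\simeq\cF$ (rank three, trivial determinant) one obtains $T_{[\cF]}\rO(3,4k)=H^1(\fso(\cF))=H^1(\cF)$ as well. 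The differential $d\varphi$ is $H^1$ of the sheaf map $\fsl(\cE)\to\fso(S^2\cE)$ induced fibrewise by the derivative of $S^2$, that is, by the exceptional isomorphism $\fsl_2\simeq\fso_3$; this is an isomorphism of sheaves, so $d\varphi$ is an isomorphism. The same isomorphism identifies the obstruction spaces $\Ext^2(\cE,\cE)=H^2(\fsl(\cE))$ and $H^2(\fso(\cF))$ compatibly, so that $S^2$ induces an isomorphism of the associated deformation functors. As $\rI(2,k)$ is smooth, this upgrades the statement that $d\varphi$ is an isomorphism to the conclusion that $\varphi$ is étale.

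\emph{Geometric conclusion and main obstacle.} An étale morphism is open, so $\varphi(\rI(2,k))$ is open in $\rO(3,4k)$; it is irreducible because $\rI(2,k)$ is, and smooth because smoothness descends along the surjective étale morphism $\varphi:\rI(2,k)\to\varphi(\rI(2,k))$. Being open and irreducible, its Zariski closure is a single irreducible component of $\rO(3,4k)$, in which the image is dense and open, and intersecting with the open locus $\rOI(3,4k)$ gives the instanton version. I expect the genuinely delicate point to be the upgrade from \emph{$d\varphi$ is an isomorphism} to \emph{$\varphi$ is étale}: one must check that the exceptional isomorphism $\fsl(\cE)\simeq\fso(S^2\cE)$ is compatible not only with the tangent ($H^1$) identifications but with the full obstruction theory valued in $H^2$, so that $S^2$ matches the two deformation problems to all orders rather than merely to first order. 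By contrast, the stability bookkeeping in the first step is routine once $H^0(S^2\cE)=0$ is available, and the final topological assertions are formal consequences of étaleness together with the irreducibility of $\rI(2,k)$.
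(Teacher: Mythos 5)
Your proof is correct and rests on the same pillars as the paper's: the Chern-class and stability bookkeeping (the paper compresses your stability argument into the single observation that $h^0(\cF)=h^0(\cF^\vee)=0$ forces slope-stability of the slope-semistable $\cF=S^2\cE$), the identification $\ad(\cF)\simeq\wedge^2\cF\simeq\cF^\vee\simeq\cF$ governing the orthogonal deformation theory, Tikhomirov's irreducibility of $\rI(2,k)$ for the density statement, and the definition of $\rP(k)$ for the restriction to $\rOI(3,4k)$. The one real divergence is how étaleness is deduced, and it is instructive: the ``genuinely delicate point'' you isolate --- checking that the exceptional isomorphism $\fsl(\cE)\simeq\fso(S^2\cE)$ is compatible with the obstruction theories so that the two deformation problems match to all orders --- is exactly what the paper's argument is structured to bypass. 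The paper quotes from Jardim--Verbitsky not only the smoothness of $\rI(2,k)$ but the vanishing $H^2(S^2\cE)=0$; since $H^2(\ad(\cF))\simeq H^2(\cF)=H^2(S^2\cE)$, this single vanishing kills both obstruction spaces at once, so $\rO(3,4k)$ is also smooth at $[\cF]$, and for a morphism between varieties smooth at the relevant points (in characteristic zero) a bijective differential already implies étaleness there --- no higher-order compatibility is needed. Your route via an isomorphism of deformation functors (an isomorphism of sheaves of Lie algebras induces one of the associated functors, tangent and obstruction maps included) is a legitimate alternative, and would even render the $H^2$-vanishing unnecessary; but as written it leaves open precisely the compatibility check you flag. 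To close your argument you should either carry out that check (e.g.\ at the level of \v{C}ech differential graded Lie algebras) or, more economically, invoke $H^2(S^2\cE)=0$ and collapse the étaleness step to the paper's two-line version.
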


\begin{proof}
Let $\cE$ be an instanton bundle in $\rI(2,k)$. We know from \cite{jardim-verbitsky:trihyperkahler} that $\rI(2,k)$ is smooth at $\cE$. More precisely, the tangent space and the obstruction space to $\rI(2,k)$ at the point $[\cE]$ corresponding to $\cE$ are described as:
\[
\cT_{[\cE]} \rI(2,k) = H^1(S^2 \cE), \qquad \mbox{with $h^1(S^2 \cE)=8k-3$}, \qquad 
\cO b_{[\cE]} \rI(2,k) = H^2(S^2 \cE)=0.
\]

Moreover $\cF=S^2\cE$ is slope-semistable since $\cE$ is and actually $\cF$ is slope-stable since $h^0(\cF)=h^0(\cF^\vee)=0$.
Therefore, computing Chern classes we see that $\cF$ is a bundle corresponding to a point $[\cF]$ of $\rO(3,4k)$.

Now recall that, for an orthogonal bundle $\cF$, the adjoint bundle $\ad(\cF)$ is defined over every fibre of $\cF$ by the adjoint representation
$\SO(3) \to \ad(\SO(3))$, see for instance \cite{ramanathan}. 
For a given $3$-dimensional vector space $F$, the associated adjoint representation is isomorphic to the exterior square $\wedge^2 F$, so $\ad(\cF) \simeq \wedge^2\cF$.
Moreover, since $\cF$ has rank $3$ and trivial determinant, we have
$\wedge^2\cF \simeq \cF^\vee$.  However $\cF$ being orthogonal, it is self-dual, namely $\cF \simeq \cF^\vee$.
Therefore, the tangent space and the obstruction space of $\rO(3,4k)$ at $[\cF]$ are described as
\[
\cT_{[\cF]} \rO(3,4k) = H^1(\ad(\cF))\simeq H^1(\cF), \qquad \cO b_{[\cF]} \rO(3,4k) = H^2(\ad(\cF)) \simeq H^2(\cF)
\]
Therefore we get the following information:
\begin{align*}
\cT_{[\cE]} \rI(2,k) & = H^1(S^2 \cE) \simeq H^1(\cF) = \cT_{[\cF]} \rO(3,4k), \\
0 = \cO b_{[\cE]} \rI(2,k) & = H^2(S^2 \cE) \simeq H^2(\wedge^2 \cF) = \cO b_{[\cF]} \rO(3,4k),
\end{align*}
where the isomorphism of tangent spaces is the differential of $\varphi$ at $[\cE]$. This proves that $\rO(3,4k)$ is smooth at $[\cF]$ and that $\varphi$ is a smooth morphism of relative dimension $0$, hence étale. Recall that $\rI(2,k)$ is irreducible by \cite{tikhomirov:odd, tikhomirov:even}. Then, the image of $\varphi$ is a smooth Zariski-dense subset of an irreducible component of $\rO(3,4k)$.

Now, assuming $[\cE]$ to lie away from the Le Potier divisor $\rP(k)$, by definition $\cF=S^2\cE$ verifies $H^*(\cF(-2))=0$ and we obtain the restriction of the image to $\rOI(3,4k)$.
\end{proof}

\subsection{Proof of the main theorems}

All that remains to finish the proof of Theorem \ref{main-2} is to combine the previous lemmas, the semicontinuity of the dimension of the cohomology groups and the result in \cite[Theorem 3.6]{costa-miro:instanton-ulrich} for the case of $r=3$. So that we just have to construct a rank $3$ instanton bundle of natural cohomology with charge $k={m + 1 \choose 2}$ for some integer $m\geq 1$ and $d=2m+1$.

\begin{proof}[Proof of Theorem \ref{main-2}]
The first part of the statement is done in Lemma \ref{etale}. To conclude we have to prove the existence of an Ulrich bundle of the form $S^2\cE(2(d-1))$ on $X_d$ for some $[\cE]$ sufficiently general in $\rI(2,k)$. For any $t \in \ZZ$, we consider the open subset $W_t$ of $\rO(3,4k)$ defined as
\[W_t=\{ [\cF] \in \rO(3,4k)  | H^\ast(\cF(t-2))=0\}.
\]

The intersection $U=W_d\cap W_{-d}\cap \varphi(\rI(2,k))$ is non-empty because the class of $\cF_m=S^2\cE_m$ belongs to $U$. Indeed, Lemma \ref{coh0} applies to guarantee that $[\cF_m] \in W_d$. Moreover, since $\cF_m$ is orthogonal, hence self-dual, using Serre duality we see that $[\cF_m]$ belongs to $W_{-d}$, that is:
\[
H^{j}(\cF_m(-d-2))^\vee \simeq H^{3-j}(\cF_m(d-2)) = 0, \qquad \mbox {for all $j \in \NN$.}
\] 

Then $U$ is an open subset of an irreducible component of $\rO(3,4k)$ by Lemma \ref{etale}. Furthermore, the intersection of $U$ with $\varphi(\rI(2,k)\setminus \rP(k))$ is also non-empty, being an intersection of non-empty open subsets inside the irreducible variety $\varphi(\rI(2,k))$ .

A representative of a class in the intersection $U \cap \varphi(\rI(2,k)\setminus \rP(k))$ is a rank $3$ instanton bundle $\cF$ with 
charge $(d^2-1)/{2}$, satisfying $H^\ast(\cF(-2-td))=0$ for $t \in \{-1,0,1\}$.
Therefore $\cF(2d-2)$ is an Ulrich bundle on $X_d$.
\end{proof}

\begin{proof}[Proof of Theorem \ref{main-1}] Let us separate the various four cases of $\bar{d}$.
Up to taking direct sums, it suffices to find Ulrich bundles of rank $2$ and/or $3$ in each case.
\begin{enumerate}[label=\roman*)]
\item This case is proven in Lemma \ref{n!}.
\item When $\bar{d} \in \{1,5\}$, the value $e$  appearing in the decomposition of $d$ of Theorem \ref{main-2} satisfies $e\in \{0,2\}$. From Theorem \ref{main-2} and \cite[Proposition 3.7]{costa-miro:instanton-ulrich}, we get, respectively, the existence of Ulrich bundles of rank $3$ and $2$ and consequently of any rank $r\ge 2$. The non-existence of Ulrich line bundles is a consequence of $\Pic(X_d)=\mathbb{Z}$ and $d\geq 2$.
\item If $\bar{d} \in \{2,4\}$, then $2|r$ by Lemma \ref{n!}. Therefore, we only need to apply \cite[Proposition 3.7]{costa-miro:instanton-ulrich} to conclude this case.
\item For $\bar{d}=3$, this corresponds to the case $e=1$ in Theorem \ref{main-2}. We get $3|r$ by Lemma \ref{n!}. Applying Theorem \ref{main-2} affords the existence of Ulrich bundles of rank $3$ and thereby the equality $\Ur(X_d)=3\NN^*$.
\end{enumerate}
\end{proof}

\bibliographystyle{amsalpha.v2}
\bibliography{ulrich.bib}

\end{document}